\newtheorem{propo}{{\bf Proposition}}[section]
\newtheorem{coro}[propo]{{\bf Corollary}}
\newtheorem{lemma}[propo]{{\bf Lemma}} \newtheorem{theor}[propo]{{\bf
Theorem}} \newtheorem{ex}{{\sc Example}}[section]
\newenvironment{proof}{{\bf Proof.}}{$\Box$}
\def\R{{\mathbb R}}
\def\C{{\mathbb C}}
\begin{document}
\vspace*{1.0in}

\begin{center} ON ABELIAN SUBALGEBRAS AND IDEALS OF MAXIMAL DIMENSION IN SUPERSOLVABLE LIE ALGEBRAS

\end{center}
\bigskip

\centerline {Manuel Ceballos \footnote[1]{Supported by MTM2010-19336 and FEDER}} \centerline {Departmento de Geometria y Topologia,
Universidad de Sevilla} \centerline {Apartado 1160, 41080, Seville, Spain}
\centerline {and}

\centerline {David A. Towers} \centerline {Department of
Mathematics, Lancaster University} \centerline {Lancaster LA1 4YF,
England}
\bigskip

\begin{abstract}
In this paper, the main objective is to compare the abelian subalgebras and ideals of maximal dimension for finite-dimensional supersolvable Lie algebras. We characterise the maximal abelian subalgebras of solvable Lie algebras and study solvable Lie algebras containing an abelian subalgebra of codimension $2$. Finally, we prove that nilpotent Lie algebras with an abelian subalgebra of codimension $3$ contain an abelian ideal with the same dimension, provided that the characteristic of the underlying field is not two. Throughout the paper, we also give several examples to clarify some results.
\end{abstract}

\noindent {\it Mathematics Subject Classification 2010:} 17B05, 17B20, 17B30, 17B50. \\
\noindent {\it Key Words and Phrases:} Lie algebras, abelian subalgebra, abelian ideal, solvable, supersolvable, nilpotent.

\section{Introduction}

Nowadays, there exists an extensive body of research of Lie Theory due to its own importance from a theoretical point of view and also due to its applications to other fields
like Engineering, Physics and Applied Mathematics. However, some aspects of Lie
algebras remain unknown. Indeed, the classification of nilpotent and solvable Lie algebras is still an open problem, although the classification of certain other types of Lie algebras (like
semi-simple and simple ones) were already obtained in 1890, at least over the complex field. In
order to make progress on these and other problems, the need for studying
different properties of Lie algebras arises. For example, conditions on the lattice
of subalgebras of a Lie algebra often lead to information about the Lie algebra itself.
Studying abelian subalgebras and ideals of a finite-dimensional Lie algebra
constitutes the main goal of this paper.

Throughout $L$ will denote a finite-dimensional Lie algebra over a field $F$. The assumptions on $F$ will be specified in each result. Algebra direct sums will be denoted by $\oplus$, whereas vector space direct
sums will be denoted by $\dot{+}$. We consider the following invariants of $L$:
$$\alpha(L) = \max \{\, \dim (A) \, | \, A \,\, {\rm is \,\, an \,\, abelian \,\, subalgebra \,\,
of \,\, } L\},$$ $$\beta(L)  = \max \{\,
\dim (B) \, | \, B \,\, {\rm is \,\, an \,\,
abelian \,\, ideal \,\,of \,\,}L\}.$$
Both invariants are important for many reasons. For example, they are
very useful for the study of Lie algebra contractions
and degenerations. There is a large literature, in particular for
low-dimensional Lie algebras, see
\cite{GRH,BU10,NPO,SEE,GOR}, and the references given therein.

The first author dealing with the invariant $\alpha(\mathfrak{g})$
 was Schur \cite{Sch}, who studied in $1905$ the
abelian subalgebras of maximal dimension contained in the Lie
algebra of $n \times n$ square matrices. Schur proved that {\it
the maximum number of linearly independent commuting $n \times n$
matrices over an algebraically closed field is
$\left[\frac{n^2}{4}\right]+1$}, which is the maximal dimension of abelian
ideals of Borel subalgebras in the general linear Lie algebra $\mathfrak{gl}(n)$ ( where $\left[x \right]$ denotes
the integer part of a real number $x$).
Let us note that this result was
obtained only over an algebraically closed field such as the
complex number field. Almost forty years later, in 1944, Jacobson
\cite{jac} gave a simpler proof of Schur's results, extending
them from algebraically closed fields to arbitrary fields. This
fact allowed several authors to gain insight into the
abelian subalgebras of maximal dimension of many different types
of Lie algebras.

More specifically, for semisimple Lie algebras $\mathfrak{s}$ the invariant
$\alpha(\mathfrak{s})$ has been completely determined by Malcev
\cite{MAL}. Since there are no abelian ideals in $\mathfrak{s}$,
we have $\beta(\mathfrak{s})=0$.
The value of $\alpha$ for simple Lie algebras
 is reproduced in table $1$.  In this paper, we will study
several properties of these invariants and compare them for supersolvable, solvable and nilpotent Lie algebras.

\begin{table}[htp]
\caption{The invariant $\alpha$ for simple Lie algebras}\label{alphaparasimples}
\begin{center}
\begin{tabular}{|c|c|c|}
\hline $\mathfrak{s}$ & $\dim (\mathfrak{s})$ & $\alpha(\mathfrak{s})$ \\
\hline $A_n,\,n\ge 1$ & $n(n+2)$ & $\lfloor (\frac{n+1}{2})^2 \rfloor$ \\
\hline $B_3$ & $21$  & $5$  \\
\hline $B_n,\, n\ge 4$  & $n(2n+1)$ & $\frac{n(n-1)}{2}+1$ \\
\hline $C_n,\,n\ge 2$ & $n(2n+1)$ & $\frac{n(n+1)}{2}$ \\
\hline $D_n,\,n\ge 4$ & $n(2n-1)$ & $\frac{n(n-1)}{2}$ \\
\hline $G_2$ & $14$ & $3$ \\
\hline $F_4$ & $52$ & $9$ \\
\hline $E_6$ & $78$ & $16$ \\
\hline $E_7$ & $133$ & $27$ \\
\hline $E_8$ & $248$ & $36$ \\
\hline
\end{tabular}\end{center}
\end{table}

We shall call $L$ {\it supersolvable} if there is a chain $0=L_0 \subset L_1 \subset \ldots \subset L_{n-1} \subset L_n=L$, where $L_i$ is an $i$-dimensional ideal of $L$. The ideals $L^{(k)}$ of the {\it derived series} are defined by $L^{(0)} = L, L^{(k+1)} = [L^{(k)},L^{(k)}]$ for $k \geq 0$; we also write $L^2$ for $L^{(1)}$ and $L^3$ for $[L^2,L]$. It is well known that every supersolvable Lie algebra is also solvable. Moreover, these classes coincide over an algebraically closed field of characteristic zero (Lie's theorem). There are, however, examples of solvable Lie algebras over algebraically closed field of non-zero characteristic which are not supersovable (see for instance \cite[page 53]{jacob} or \cite{BAR}). The {\it Frattini ideal} of $L$, $\phi(L)$, is the largest ideal of $L$ contained in all maximal subalgebras of $L$. We will denote the {\it centre} of $L$ by $Z(L)=\{x \in L: [x,y]=0, \, \forall \, y \in L\}$ and the {\it centralizer} of a subalgebra $A$ of $L$ by $C_{L}(A)=\{x \in L: [x,A]=0\}$. Given a subalgebra $A$ of $L$, the {\it core} of $A$, denoted by $A_L$, is the largest ideal of $L$ contained in $A$. The {\it abelian socle} of $L$, Asoc$L$, is the sum of the minimal abelian ideals of $L$.

The structure of this paper is as follows. In section $2$ we give some bounds for the invariants $\alpha$ and $\beta$. In section $3$, we consider the classes of supersolvable, solvable and nilpotent Lie algebras $L$ with $\alpha(L) = n-1$ or $n-2$. In particular, we characterise $n$-dimensional solvable Lie algebras $L$ for which $\alpha(L)= n-2$ and prove that every supersolvable Lie algebra, $L$, of dimension $n$ with $\alpha(L)=n-2$ also satisfies $\beta(L)=n-2$. In the final section we show the $\alpha$ and $\beta$ invariants also coincide for nilpotent Lie algebras $L$ with $\alpha(L) = n-3$, provided that $F$ has characteristic different from two. We also give an example to show that the restriction on $F$ is necessary.

\section{Some bounds on $\alpha(L)$ and $\beta(L)$}
We shall call a $L$ {\it metabelian} if $L^2$ is abelian. First we have a bound on $\beta(L)$ for certain metabelian Lie algebras.

\begin{propo}\label{p:metab} Let $L$ be a metabelian Lie algebra of dimension $n$, and suppose that $\dim L^2 = k$. Then $\dim (L/C_L(L^2)) \leq [k^2/4]+1$. If, further, $L$ splits over $L^2$ then $\beta(L) \geq n - [k^2/4]-1$.
\end{propo}
\begin{proof} Let ad $\colon L \rightarrow$ Der $L^2$ be defined by ad\,$x(y) = [y,x]$ for all $y \in L^2$. Then ad is a homomorphism with kernel $C_L(L^2)$. It follows that $L/C_L(L^2) \cong D$ where $D$ is an abelian subalgebra of Der $L^2 \cong gl(n,F)$. It follows from Schur's Theorem on commuting matrices (see \cite{jac}) that $\dim (L/C_L(L^2) \leq [k^2/4]+1$.
\par
Now suppose that $L = L^2 \oplus B$ where $B$ is an abelian subalgebra of $L$. Then $C_L(L^2) = L^2 \oplus B \cap C_L(L^2)$ which is an abelian ideal of $L$.
\end{proof}
\bigskip

We call $L$ {\it completely solvable} if $L^2$ is nilpotent. Over a field of characteristic zero, every solvable Lie algebra is completely solvable. Next we note that if $L$ is completely solvable, has an abelian nilradical (so is metabelian) and the underlying field is perfect then $\alpha(L)$ and $\beta(L)$ are easily identified. If $\bar{F}$ is the algebraic closure of $F$ we put $\bar{S} = S \otimes_F \bar{F}$ for every subalgebra $S$ of $L$.

\begin{lemma}\label{l:ext} $\alpha(\bar{L}) \geq \alpha(L)$, $\beta(\bar{L}) \geq \beta(L)$.
\end{lemma}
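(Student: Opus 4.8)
The plan is to lift an optimal abelian subalgebra (respectively, ideal) of $L$ to $\bar{L}$ by base change. Recall that $\bar{L} = L \otimes_F \bar{F}$ carries the Lie bracket obtained by extending the bracket of $L$ $\bar{F}$-bilinearly, so that for any pair of $F$-subspaces $U, V$ of $L$ one has $[\bar{U}, \bar{V}] = \overline{[U,V]}$ inside $\bar{L}$, where $\bar{U} = U \otimes_F \bar{F}$ is viewed as an $\bar{F}$-subspace of $\bar{L}$ via the exactness (flatness) of $-\otimes_F \bar{F}$. Moreover, if $\{e_1, \ldots, e_m\}$ is an $F$-basis of $U$, the same set is an $\bar{F}$-basis of $\bar{U}$, so $\dim_{\bar{F}} \bar{U} = \dim_F U$.

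First I would establish the inequality for $\alpha$. Let $A$ be an abelian subalgebra of $L$ with $\dim_F A = \alpha(L)$. Then $[\bar{A}, \bar{A}] = \overline{[A,A]} = 0$, so $\bar{A}$ is an abelian subalgebra of $\bar{L}$, and by the dimension remark $\dim_{\bar{F}} \bar{A} = \dim_F A = \alpha(L)$. Hence $\alpha(\bar{L}) \geq \alpha(L)$.

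Next I would treat $\beta$ by exactly the same device. Let $B$ be an abelian ideal of $L$ with $\dim_F B = \beta(L)$. As above $\bar{B}$ is abelian, and since $[\bar{L}, \bar{B}] = \overline{[L,B]} \subseteq \bar{B}$, it is an ideal of $\bar{L}$; again $\dim_{\bar{F}} \bar{B} = \beta(L)$, which gives $\beta(\bar{L}) \geq \beta(L)$.

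There is essentially no serious obstacle here: the only points needing care are the compatibility of the Lie bracket with base change and the invariance of dimension under field extension, both of which are standard properties of tensoring with a field extension. One can alternatively phrase the whole argument functorially, noting that $-\otimes_F \bar{F}$ is an exact functor carrying $L$ to $\bar{L}$ and sending subalgebras to subalgebras and ideals to ideals while preserving dimension, so that the supremum defining $\alpha$ (respectively $\beta$) over $\bar{F}$ is taken over a family containing the base-changed images of the $F$-witnesses.
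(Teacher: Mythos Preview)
Your argument is correct and is precisely the standard base-change reasoning one expects here. The paper in fact states this lemma without proof, presumably regarding it as evident; your write-up supplies exactly the details that justify that omission.
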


\begin{lemma}\label{l:nilrad}
Let $L$ be any solvable Lie algebra with nilradical $N$. Then $C_L(N) \subseteq N$
\end{lemma}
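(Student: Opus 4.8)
The plan is to argue by contradiction: I would set $C := C_L(N)$ and show that if $C \not\subseteq N$ then $L$ possesses a nilpotent ideal strictly containing $N$, which is impossible since $N$ is the largest nilpotent ideal of $L$. Before that I would record three elementary facts. First, the centralizer of an ideal is again an ideal (a Jacobi-identity computation), so $C \triangleleft L$. Second, an element of $N$ centralizes $N$ precisely when it lies in $Z(N)$, so $C \cap N = Z(N)$, and this is abelian. Third, since every element of $C$ centralizes all of $N$ and $Z(N) \subseteq N$, we have $Z(N) \subseteq Z(C)$.

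Assume now $C \not\subseteq N$. Then $(C+N)/N$ is a nonzero ideal of the solvable Lie algebra $L/N$, and hence contains a minimal ideal $W/N$ of $L/N$; a minimal ideal of a solvable Lie algebra is abelian (otherwise its derived algebra, being a proper nonzero ideal, would contradict either minimality or solvability), so $[W,W] \subseteq N$. Because $N \subseteq W \subseteq C + N$ I can write $W = C_0 + N$ with $C_0 := W \cap C$. Then $[C_0,C_0] \subseteq [W,W] \cap C \subseteq N \cap C = Z(N)$, and since $Z(N) \subseteq C_0$ and $Z(N) \subseteq Z(C)$ this gives $[C_0,C_0] \subseteq Z(C_0)$; thus $C_0$ is nilpotent, of class at most $2$. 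Moreover $[C_0,N] = 0$ as $C_0 \subseteq C_L(N)$, so a routine induction with the Jacobi identity shows that, for $k \geq 2$, the $k$-th term of the lower central series of $W = C_0 + N$ is the sum of the $k$-th terms of those of $C_0$ and of $N$. Both $C_0$ and $N$ being nilpotent, this sum vanishes for large $k$, so $W$ is nilpotent.

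Finally, $W$ is an ideal of $L$ (the preimage of the ideal $W/N$ of $L/N$), it is nilpotent, and $N \subseteq W$ with $W \neq N$; this contradicts the maximality of the nilradical, and therefore $C_L(N) \subseteq N$. The point I would flag as the real issue is that one cannot simply pass to $L/N$ and induct on dimension, because the nilradical is not preserved by quotients: the nilradical of $L/N$ can be strictly larger than $N/N = 0$. Choosing the minimal ideal $W/N$ inside $(C+N)/N$ is exactly the device that confines the problem to the small subalgebra $C_0$, whose nilpotency is immediate once one knows $[C_0,C_0] \subseteq Z(N)$.
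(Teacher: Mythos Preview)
Your proof is correct and follows essentially the same strategy as the paper: assume $C_L(N)\not\subseteq N$, locate an abelian ideal in a suitable quotient, and lift it to a nilpotent ideal of $L$ that violates the maximality of $N$. The paper works modulo $N\cap C_L(N)$ and takes an abelian ideal $A/(N\cap C_L(N))$ inside $C_L(N)/(N\cap C_L(N))$; since $A\subseteq C_L(N)$ and $[A,A]\subseteq N$, one gets $A^3\subseteq [A,N]=0$ immediately, so $A$ itself is the offending nilpotent ideal. Your route via $L/N$ and the decomposition $W=C_0+N$ reaches the same point, but note that once you have shown $C_0$ is a nilpotent ideal of $L$ you are already done: $C_0\subseteq N$ forces $W=C_0+N=N$, contradicting $W/N\neq 0$, so the lower-central-series computation for $W$ is unnecessary.
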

\begin{proof} Suppose that $C_L(N) \not \subseteq N$. Then there is a non-trivial abelian ideal $A/(N \cap C_L(N)$ of $L/(N \cap C_L(N)$ inside $C_L(N)/(N \cap C_L(N)$. But now $A^3 \subseteq [A, N] = 0$, so $A$ is a nilpotent ideal of $L$. It follows that $A \subseteq N \cap C_L(N)$, a contradiction.
\end{proof}

\begin{theor}\label{t:abnil} If $F$ is a perfect field and $L$ is a completely solvable Lie algebra with abelian nilradical $N$ then $\alpha(L) = \beta(L) = \dim N$.
\end{theor}
\begin{proof} It is clear that $N$ is the unique maximal abelian ideal of $L$. Let $A$ be an abelian subalgebra of $L$ of maximal dimension. If $N \subseteq A$, then $A \subseteq C_L(N) = N$, by Lemma \ref{l:nilrad}, so $N = A$ and the result is clear, so suppose that $N \not \subseteq A$ and put $U = N + A$.
\par

Consider first the case where $F$ is algebraically closed and $\phi(L) = 0$.   Pick any $a \in A$ and put $C = N + Fa$.  Then $\phi(C) = 0$, by \cite[Theorem 2.5]{tow-var},
so $N \subseteq N(C)= {\rm Asoc} (C)$ by \cite[ Theorem 7.4]{frat}, and $N$ is completely reducible as an $Fa$-module.
Write $N = \oplus_{i=1}^{k} N_{i}$, where $N_{i}$ is an irreducible $Fa$-module for $1 \leq i \leq k$. Then the minimal polynomial of the restriction of ad $a$ to $N_{i}$ is irreducible
for each $i$, and so $\{({\rm ad} a)|_{N}: a \in A \}$ is a set of commuting diagonalizable operators. Thus $N =$ Asoc$L = Fn_1 + \ldots Fn_r$, where $Fn_i$ is a minimal ideal of $L$ for $1 \leq i \leq r$. If $n_i \in A$, then $C_U(n_i) = U$; if $n_i \notin A$ then $\dim C_U(n_i) \geq \dim U - 1$, since $U/C_U(n_i) \cong D$, where $D$ is a subalgebra of Der $Fn_i$. But
\[
N = C_L(N) \supseteq C_U(N) = \bigcap_{i=1}^r C_U(Fn_i),
\]
so
\[
\dim N \geq \dim U - (r - \dim (N \cap A)) = \dim A,
\]
and the result holds in this case.
\par

So suppose now that $\phi(L)$ is not necessarily trivial. Then $N/\phi(L)$ is the nilradical of $L/\phi(L)$, by \cite[Theorem 6.1]{frat}, and so $L/\phi(L)$ has abelian nilradical. Also $(A + \phi(L))/\phi(L)$ is an abelian subalgebra of $L/\phi(L)$. It follows from the above that
\[
\dim \left( \frac{A + \phi(L)}{\phi(L)} \right) \leq \dim \left( \frac{N}{\phi(L)} \right), \hspace{.2cm} \hbox{whence } \dim A \leq \dim N.
\]
Finally consider the case where $F$ is not necessarily algebraically closed. Since $F$ is perfect, $\overline{N(L)} = N(\bar{L})$, by \cite[page 42]{baki}. Hence
\[ \beta(L) \leq \alpha(L) \leq \alpha(\bar{L}) = \beta(\bar{L}) = \dim N(\bar{L})= \dim \overline{N(L)} = \dim N(L) = \beta(L)
\]
by Lemma \ref{l:ext} and the above.
\end{proof}
\bigskip

We obtain bounds for supersolvable Lie algebras by following a development similar to \cite[Lemma 2]{stew}.

\begin{lemma}\label{l:sup} Let $L$ be a supersolvable Lie algebra and let $A$ be a maximal abelian ideal of $L$. Then $C_L(A) = A$.
\end{lemma}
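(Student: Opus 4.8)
The plan is to argue by contradiction, using the defining ideal chain of a supersolvable Lie algebra. Since $A$ is an ideal of $L$, its centralizer $C_L(A)$ is again an ideal of $L$ (a standard Jacobi-identity computation), and $A \subseteq C_L(A)$ because $A$ is abelian. Suppose, for a contradiction, that $A \neq C_L(A)$. Then $C_L(A)/A$ is a nonzero ideal of the quotient $L/A$, and $L/A$ is again supersolvable.

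Next I would invoke the structural fact that in a supersolvable Lie algebra every nonzero ideal contains a one-dimensional ideal of the whole algebra. Indeed, such an ideal contains a minimal ideal $M$ by finite-dimensionality, and if $0 = L_0 \subset L_1 \subset \cdots \subset L_n = L$ is a defining chain of ideals, then the first $L_i$ meeting $M$ nontrivially satisfies $M \cap L_i = M$ by minimality, so $M$ embeds into the one-dimensional factor $L_i/L_{i-1}$ and hence $\dim M = 1$. Applying this to the ideal $C_L(A)/A$ of $L/A$, I obtain a one-dimensional ideal $B/A$ of $L/A$ with $A \subsetneq B \subseteq C_L(A)$; write $B = A + Fx$ with $x \in C_L(A) \setminus A$.

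Then $B$ is an ideal of $L$, being the preimage of an ideal of $L/A$ under the natural projection. Moreover $B$ is abelian: $[A,A] = 0$ since $A$ is abelian, $[A,x] = 0$ since $x \in C_L(A)$, and $[x,x] = 0$, so $[B,B] = [A + Fx,\, A + Fx] = 0$. Thus $B$ is an abelian ideal of $L$ strictly containing $A$, contradicting the maximality of $A$. Therefore $C_L(A) = A$.

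The only mildly delicate ingredient is the observation that minimal ideals (equivalently, chief factors) of a supersolvable Lie algebra are one-dimensional; the rest is routine bracket bookkeeping. If preferred, one can avoid quoting this as a separate fact by directly refining the supersolvable ideal chain of $L/A$ so that it passes through $C_L(A)/A$, and taking $B/A$ to be the first one-dimensional step of the refined chain lying inside $C_L(A)/A$.
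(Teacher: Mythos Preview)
Your proof is correct and follows essentially the same route as the paper: both pass to $L/A$, pick a minimal ideal $B/A$ inside $C_L(A)/A$, and observe that $B=A+Fb$ is an abelian ideal strictly containing $A$. You simply spell out in more detail why such a minimal ideal is one-dimensional (via the supersolvable chain), a point the paper leaves implicit.
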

\begin{proof} We have that $C_L(A)$ is an ideal of $L$. Suppose that $C_L(A) \neq A$. Let $B/A$ be a minimal ideal of $L/A$ with $B \subset C_L(A)$. Then, for some $b \in B$, $B = A + Fb$, which is an abelian ideal of $L$, contradicting the maximality of $A$. The result follows.
\end{proof}

\begin{propo}\label{p:supsolv} Let $L$ be a supersolvable Lie algebra, $A$ any maximal abelian ideal of $L$. Suppose $\dim A = k$. Then $L/A$ is isomorphic to a Lie algebra of $k \times k$ lower triangular matrices.
\end{propo}
\begin{proof} Let ad $\colon L \rightarrow$ Der $A$ be defined by ad\,$x(y) = [y,x]$. Then ad is a homomorphism with kernel $C_L(A) = A$, by Lemma \ref{l:sup}. Since $L$ is supersolvable there is a flag of ideals $0 = A_0 \subset A_1 \subset \ldots \subset A_k = A$ of $L$. Choose a basis $e_1, \ldots , e_k$ for $A$ with $e_i \in A_i$. With respect to this basis the action of $L$ on $A$ is represented by $k \times k$ lower triangular matrices, since $[A_i, L] \subseteq A_i$ for each $0 \leq i \leq k$.
\end{proof}

\begin{coro}\label{c:supsolv} Let $L$ be a supersolvable Lie algebra with a maximal abelian ideal $A$ of dimension $k$. Then $\dim L \leq \frac{k(k+3)}{2}$ and $L$ has derived length at most $k + 1$.
\end{coro}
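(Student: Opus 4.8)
The plan is to obtain both assertions as immediate consequences of Proposition \ref{p:supsolv}, which provides an isomorphism between $L/A$ and a subalgebra $D$ of the Lie algebra $t(k,F)$ of all $k \times k$ lower triangular matrices over $F$. For the dimension bound I would simply note that $\dim t(k,F) = \frac{k(k+1)}{2}$, so that $\dim (L/A) = \dim D \leq \frac{k(k+1)}{2}$; since $A$ is a subspace of dimension $k$ this gives
\[
\dim L \;=\; \dim A + \dim (L/A) \;\leq\; k + \frac{k(k+1)}{2} \;=\; \frac{k(k+3)}{2}.
\]

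For the bound on the derived length the key step is to show that $t(k,F)$ itself is solvable of derived length at most $k$. To do this I would introduce the descending chain of subspaces $T_0 = t(k,F) \supseteq T_1 \supseteq \cdots \supseteq T_k = 0$, where for $j \geq 1$ the subspace $T_j$ is spanned by the matrix units $E_{rs}$ with $r - s \geq j$ (so $T_1$ is the strictly lower triangular subalgebra). A direct computation with matrix units shows $[T_0,T_j] \subseteq T_j$ for all $j$ and $[T_i,T_j] \subseteq T_{i+j}$ for $i,j \geq 1$. Since $[T_0,T_0] \subseteq T_1$, an easy induction then yields $t(k,F)^{(m)} \subseteq T_{2^{m-1}} \subseteq T_m$ for every $m \geq 1$, using that $2^{m-1} \geq m$; in particular $t(k,F)^{(k)} \subseteq T_k = 0$. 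As the derived length of a subalgebra cannot exceed that of the ambient algebra, it follows that $L/A \cong D$ has derived length at most $k$, i.e.\ $L^{(k)} \subseteq A$. Finally, since $A$ is abelian, $L^{(k+1)} = [L^{(k)},L^{(k)}] \subseteq [A,A] = 0$, which is the desired conclusion.

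I expect the only real bookkeeping to lie in the derived-series estimate for $t(k,F)$: one must keep track of the fact that each derived step roughly doubles the ``subdiagonal level'' $T_j$ at which one sits, and then invoke the elementary inequality $2^{m-1} \geq m$ to be sure that the zero subspace $T_k$ is reached within $k$ steps. The remaining arguments — the dimension count, the passage to the quotient $L/A$, and the final use of the fact that $A$ is abelian — are entirely routine.
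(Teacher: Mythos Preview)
Your proposal is correct and follows exactly the same route as the paper: both derive the corollary from Proposition~\ref{p:supsolv} together with the two facts that the algebra of $k\times k$ lower triangular matrices has dimension $\tfrac{k(k+1)}{2}$ and derived length $k$. The paper simply states these two facts without justification, whereas you have supplied a full proof of the derived-length bound via the filtration $T_j$; your extra detail is fine and the argument is sound.
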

\begin{proof} The Lie algebra of $k \times k$ lower triangular matrices has dimension $\frac{k(k+1)}{2}$ and derived length $k$.
\end{proof}

\begin{coro}\label{c:supsolvbound} Let $L$ be a supersolvable Lie algebra of dimension $n$. Then
\[  \beta(L) \geq \left[ \frac{\sqrt{8n+9}-3}{2} \right].
\]
\end{coro}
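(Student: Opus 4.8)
The plan is to use Corollary~\ref{c:supsolv} as the key lever: if $A$ is a maximal abelian ideal of $L$ of dimension $k$, then $\beta(L) \geq \dim A = k$, while at the same time $n = \dim L \leq \frac{k(k+3)}{2}$. So first I would fix a maximal abelian ideal $A$ and set $k = \dim A$, noting $\beta(L) \geq k$ immediately. The remaining work is purely arithmetic: from $n \leq \frac{k(k+3)}{2}$ I need to extract a lower bound on $k$, which will then be a lower bound on $\beta(L)$.

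Next I would solve the quadratic inequality $2n \leq k^2 + 3k$, i.e. $k^2 + 3k - 2n \geq 0$, for the positive root. Completing the square gives $\left(k + \tfrac{3}{2}\right)^2 \geq 2n + \tfrac{9}{4}$, hence $k \geq \frac{\sqrt{8n+9}-3}{2}$ (the other root is negative and irrelevant since $k \geq 0$). Since $k$ is an integer and $k \geq \frac{\sqrt{8n+9}-3}{2}$, we get $k \geq \left[\frac{\sqrt{8n+9}-3}{2}\right]$ — here one must be slightly careful that $k$ being an integer at least as large as a real number $x$ forces $k \geq \lceil x \rceil \geq [x]$; since the statement only claims the floor, the weaker bound $k \geq [x]$ suffices and no subtlety about ceilings is actually needed. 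Combining, $\beta(L) \geq k \geq \left[\frac{\sqrt{8n+9}-3}{2}\right]$, which is exactly the claim.

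I do not anticipate a genuine obstacle here; the proposition is essentially a repackaging of Corollary~\ref{c:supsolv}. The only point requiring a line of care is the direction of the algebraic manipulation — we know $n \leq \frac{k(k+3)}{2}$ and want to bound $k$ from below, so we must verify that the map $k \mapsto \frac{k(k+3)}{2}$ is increasing on the relevant range (it is, for $k \geq 0$) so that the inequality can be inverted. One should also check that a maximal abelian ideal exists, which is immediate since $L$ is finite-dimensional and $(0)$ is an abelian ideal, so the set of abelian ideals is nonempty and has a member of maximal dimension. The whole argument fits in a few lines.

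\begin{proof}
Let $A$ be a maximal abelian ideal of $L$ and put $k = \dim A$; such an $A$ exists since $L$ is finite-dimensional. Then $\beta(L) \geq k$. By Corollary \ref{c:supsolv}, $n = \dim L \leq \frac{k(k+3)}{2}$, so $k^2 + 3k - 2n \geq 0$. Completing the square gives $\left(k + \frac{3}{2}\right)^2 \geq 2n + \frac{9}{4}$, and since $k \geq 0$ we obtain $k \geq \frac{\sqrt{8n+9}-3}{2}$. As $k$ is an integer, $k \geq \left[\frac{\sqrt{8n+9}-3}{2}\right]$, whence $\beta(L) \geq \left[\frac{\sqrt{8n+9}-3}{2}\right]$.
\end{proof}
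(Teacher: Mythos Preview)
Your proof is correct and is exactly the intended argument: the paper states this corollary without proof immediately after Corollary~\ref{c:supsolv}, treating it as the obvious inversion of the quadratic bound $n \leq \frac{k(k+3)}{2}$, which is precisely what you carry out.
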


\begin{coro}\label{solvbound} Let $L$ be a non-abelian solvable Lie algebra of dimension $n$ over an algebraically closed field of characteristic zero. Then
\[  \left[ \frac{\sqrt{8n+9}-3}{2} \right] \leq \alpha(L) \leq n-1.
\]
\end{coro}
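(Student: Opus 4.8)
The plan is to deduce both inequalities directly from results already established in the paper, so there is very little to do.

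For the upper bound I would argue as follows: if $L$ had an abelian subalgebra $A$ with $\dim A = n$, then necessarily $A = L$, so $L$ would be abelian, contradicting the hypothesis; hence $\alpha(L) \leq n-1$.

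For the lower bound the crucial point is that over an algebraically closed field of characteristic zero Lie's theorem guarantees that every solvable Lie algebra is supersolvable, so in particular the given $L$ is supersolvable. Corollary \ref{c:supsolvbound} then applies and yields $\beta(L) \geq \left[ \frac{\sqrt{8n+9}-3}{2} \right]$. Since any abelian ideal of $L$ is in particular an abelian subalgebra of $L$, we have $\beta(L) \leq \alpha(L)$, and combining this with the previous inequality gives the claimed lower bound on $\alpha(L)$.

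I do not expect a genuine obstacle here: the entire content lies in correctly lining up the hypotheses, namely invoking Lie's theorem — which is exactly what forces the restriction to algebraically closed fields of characteristic zero — in order to pass from ``solvable'' to ``supersolvable'' so that Corollary \ref{c:supsolvbound} becomes applicable, together with the trivial inequality $\beta(L) \leq \alpha(L)$. As a sanity check one can also note that for every admissible $n \geq 2$ the lower bound never exceeds $n-1$, so the statement is non-vacuous.
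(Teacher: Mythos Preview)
Your proposal is correct and follows essentially the same route as the paper: the lower bound comes from Corollary~\ref{c:supsolvbound} after invoking Lie's theorem to pass from solvable to supersolvable, together with $\beta(L)\le\alpha(L)$. For the upper bound the paper cites \cite[Proposition~2.5]{bc} rather than giving the direct argument you supply, but your elementary observation that $\alpha(L)=n$ forces $L$ to be abelian is perfectly sufficient.
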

\begin{proof} Simply use Corollary \ref{c:supsolvbound} and \cite[Proposition 2.5]{bc}.
\end{proof}

\section{Supersolvable Lie algebras with $\alpha(L) = n-1$ or $n-2$}
\begin{propo}\label{p:codimone} Let $A$ be an abelian subalgebra of a Lie algebra $L$. Suppose that $K = A + Fe_1$ is a subalgebra of $L$, and that there is an $x \in L$ such that $[x,K] \subseteq K$, but $[x,A] \not \subseteq A$. Then either $K$ is abelian or $K^2$ is one dimensional and $Z(K)$ has codimension at most one in $A$.
\end{propo}
\begin{proof} Let $e_2, \ldots, e_k$ be a basis for $A$ such that $e_1 = [x,e_2]$, say. Let $[x,e_j] = \sum_{i=1}^k \alpha_{ji} e_i$ for $1 \leq j \leq k$. Then
$[e_2,[x,e_j]] = \alpha_{j1}[e_2,e_1]$, so, for $2 \leq j \leq k$,
\[
0 = [x,[e_2,e_j]] = -[e_2,[e_j,x]] - [e_j,[x,e_2]] = \alpha_{j1}[e_2,e_1] - [e_j,e_1].
\]
Hence $[e_1,e_j] = \alpha_{j1}[e_1,e_2]$. It follows that $K^2 = F[e_1,e_2]$. Put $v_j = \alpha_{j1}e_2 - e_j$ for $3 \leq j \leq k$. Then $v_3, \dots, v_k \in Z(K) \cap A$.
\end{proof}
\bigskip

The above result deals with the case where an abelian subalgebra of maximal dimension has codimension one in an ideal of $L$.

\begin{coro}\label{c:codimone} Let $L$ be a supersolvable Lie algebra and let $A$ be an abelian subalgebra of maximal dimension in $L$. If $A \subset K$ where $K$ is an ideal of $L$ and $A$ has codimension $1$ in $K$, then $\alpha(L) = \beta(L)$.
\end{coro}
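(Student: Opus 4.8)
The plan is to combine Proposition~\ref{p:codimone} with the structure theory of supersolvable Lie algebras. Since $L$ is supersolvable and $K$ is an ideal, there is a chain of ideals of $L$ passing through $K$, and in particular there is an ideal $K_1$ of $L$ with $K \subseteq K_1$ and $\dim(K_1/K) = 1$ (or $K = L$, which I treat separately). Pick $x \in K_1 \setminus K$; then $[x,K] \subseteq K$ because $K$ is an ideal. Now there are two cases. If $[x,A] \subseteq A$, then $A + Fx$ is a subalgebra in which $A$ has codimension one, and iterating/chasing this should eventually produce either an abelian subalgebra strictly larger than $A$ (contradicting maximality, forcing $A = K$, whence by Lemma~\ref{l:sup}-type reasoning $A$ is already the maximal abelian ideal) or put us in the situation $[x,A] \not\subseteq A$ for some choice of the extending element.

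In the main case $[x,A] \not\subseteq A$, Proposition~\ref{p:codimone} applies to $K = A + Fe_1$ (after choosing $e_1 \in A$ appropriately so that $A = (A\cap Z(K)) \dot{+} Fe_1$ with $K$ non-abelian), and gives that $K^2$ is one-dimensional and $Z(K)$ has codimension at most one in $A$, hence $\dim Z(K) \geq k-1$ where $k = \dim A = \alpha(L)$. The next step is to locate an abelian ideal of $L$ of dimension $k$. The natural candidate is built from $Z(K)$: since $K$ is an ideal of $L$, $Z(K)$ is invariant under the action of $L$ (characteristic subalgebra of an ideal), so $Z(K)$ is an ideal of $L$, abelian, of dimension $\geq k-1$. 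If $\dim Z(K) = k$ we are done since then $\beta(L) \geq k = \alpha(L) \geq \beta(L)$. If $\dim Z(K) = k-1$, I want to enlarge $Z(K)$ by one dimension inside an abelian ideal: because $L$ is supersolvable, there is an ideal $B$ of $L$ with $Z(K) \subset B$ and $\dim(B/Z(K)) = 1$; write $B = Z(K) + Fb$. The claim is that $b$ can be chosen (modulo $Z(K)$) so that $B$ is abelian. Since $B^2 \subseteq [B,Z(K)] + [B, Fb] $ and $[Z(K),Z(K)] = 0$, we have $B^2 \subseteq F[b, Z(K)]$-span together with $F[b,b']$; using that $K^2$ is one-dimensional and spanned by a fixed vector, and that $B$ meets $K$ in the codimension-one subspace $Z(K)$, a short computation should show $[b, Z(K)] = 0$ and hence $B$ is abelian of dimension $k$, giving $\beta(L) \geq k$.

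The final step is to assemble the inequality $\beta(L) \leq \alpha(L) \leq k$ (the left inequality is trivial, the right is the hypothesis $\dim A = \alpha(L)$ with $\dim K = k$ meaning $\alpha(L) = k-1$... ) — here I should be careful with bookkeeping: with $\dim A = \alpha(L)$ and $A$ of codimension one in $K$, we have $\dim K = \alpha(L)+1$, and the abelian ideal produced above has dimension $\alpha(L)$, so $\beta(L) \geq \alpha(L)$, and since always $\beta(L) \leq \alpha(L)$, equality follows.

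\textbf{Main obstacle.} The delicate point is the case analysis on whether such an extending element $x$ can be chosen to satisfy $[x,A]\not\subseteq A$, and, when $\dim Z(K) = k-1$, verifying that the one-dimensional extension of $Z(K)$ to an ideal of $L$ can actually be taken abelian rather than merely producing a nilpotent ideal of class two. This requires pinning down precisely how $[b,-]$ acts on $Z(K)$ using the one-dimensionality of $K^2$ together with supersolvability (so that the relevant actions are by triangular matrices), and that is where the real work lies; the earlier reductions and the final inequality are routine.
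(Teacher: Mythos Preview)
Your overall strategy matches the paper's: reduce to the case where $A$ is not an ideal, apply Proposition~\ref{p:codimone} to see that $K^2$ is one-dimensional and $Z(K)$ has codimension one in $A$, then extend the ideal $Z(K)$ by one dimension using supersolvability. But two points need correcting.

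First, the opening case analysis is unnecessary. You do not need to locate $x$ inside a particular ideal $K_1\supset K$, nor do you need to worry about the possibility $[x,A]\subseteq A$. Simply argue: if $A$ is already an ideal of $L$ we are done; otherwise there exists \emph{some} $x\in L$ with $[x,A]\not\subseteq A$, and since $K$ is an ideal we automatically have $[x,K]\subseteq K$, so the hypotheses of Proposition~\ref{p:codimone} hold immediately. (Incidentally, in that proposition $e_1$ is the element of $K\setminus A$, not an element of $A$ as you wrote.) Note also that $K$ cannot be abelian here, since $\dim K>\alpha(L)$; hence the proposition forces $\dim Z(K)=\alpha(L)-1$ exactly, for if $\dim Z(K)\geq \alpha(L)$ then $K=Z(K)+Fy$ would be abelian.

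Second, and more importantly, the step you flag as the ``main obstacle'' is in fact trivial once you make the right choice of $B$: take the chief factor $B/Z(K)$ \emph{inside} $K$, i.e.\ with $B\subset K$. This is possible because $Z(K)\subsetneq K$ and $K$ is an ideal of the supersolvable algebra $L$. Then $B=Z(K)+Fb$ with $b\in K$, and since $Z(K)$ is the centre of $K$ we get $[b,Z(K)]=0$ immediately; thus $B$ is abelian of dimension $\alpha(L)$. Your proposed route---extending $Z(K)$ to an arbitrary ideal $B$ of $L$ and then trying to argue $[b,Z(K)]=0$ via the one-dimensionality of $K^2$---does not work in general when $b\notin K$, and the ``short computation'' you allude to is not there. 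The paper's proof is exactly this: choose $B\subset K$, and the abelianity is automatic.
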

\begin{proof} If $A$ is an ideal of $L$ then the result is clear, so suppose that it is not an ideal of $L$. With the same notation as in Proposition \ref{p:codimone} the hypotheses of that result are satisfied. Then $v_3, \dots, v_k \in Z(K)$; in fact, the maximality of $A$ gives $Z(K) = Fv_3 + \dots + Fv_k$. Let $B/(Fv_3 + \dots + Fv_k)$ be a chief factor of $L$ with $B \subset K$. Then $B$ is an abelian ideal of $L$ with the same dimension as $A$. The result follows.
\end{proof}
\bigskip

Next we consider the situation where $L$ has a maximal subalgebra that is abelian: first when $L$ is any non-abelian Lie algebra and $F$ is algebraically closed, and then when $L$ is solvable but $F$ is arbitrary.

\begin{propo}\label{p:maxab} Let $L$ be a non-abelian Lie algebra of dimension $n$ over an algebraically closed field $F$ of any characteristic. Then $L$ has a maximal subalgebra $M$ that is abelian if and only if $L = A \dot{+} Ff$ for some $f \in gl(V)$, where $f \not \equiv 0$, $A$ is abelian and $[f,v] = -[v,f] = f(v)$. In particular, $\alpha(L) = \beta(L) = n-1$.
\end{propo}
\begin{proof} Suppose first that $L$ has a maximal subalgebra $M$ that is abelian. If $M$ is an ideal of $L$ we have finished. So suppose that $M$ is self-idealising, in which case $L^2$ is one-dimensional and $\phi(L) = 0$, by \cite[Proposition 3.2]{ind}. Write $L^2 = Fb$ and note that Asoc$L = L^2 \oplus Z(L)$. Now $L =$ Asoc$L \dot{+} C$, where $C$ is abelian, by \cite[Theorem 7.4]{frat}. For each $c \in C$ we have that $[c,b] = \lambda(c)b$, for some $\lambda(c) \in F$. Since $C \cap Z(L) = 0$, $C$ must be one-dimensional and the result follows.
\par
The converse is clear.
\end{proof}
\bigskip

The following is a generalisation of \cite[Proposition 3.1]{ind}

\begin{propo}\label{p:compsolv} Let $L$ be a solvable Lie algebra. Then $L$ has a maximal subalgebra $M$ that is abelian if and only if either
\begin{itemize}
\item[(i)] $L$ has an abelian ideal of codimension one in $L$; or
\item[(ii)] $L^{(2)} = \phi(L) = Z(L)$, $L^2/L^{(2)}$ is a chief factor of $L$, and $L$ splits over $L^2$.
\end{itemize}
\end{propo}
\begin{proof} By \cite[Proposition 3.1]{ind} it suffices to show that if $L$ has a maximal subalgebra $M$ that is not an ideal then $L^2$ is nilpotent. By maximality, $M$ is self-idealising, and from solvability there is a $k \geq 1$ such that $L^{(k)} \not \subseteq M$ but $L^{(k+1)} \subseteq M$. Then $L = M + L^{(k)}$, whence $L^2 \subseteq L^{(k)}$. It follows that $L^{(2)} \subseteq M$ and we have $M \subseteq C_L(L^{(2)})$. By maximality and the fact that $M$ is self-idealising, we get $C_L(L^{(2)}) = L$, which yields that $L^2$ is nilpotent. 
\end{proof}
\bigskip

Next we characterise solvable Lie algebras $L$ whose biggest abelian subalgebras have codimension two in $L$. The following proof relies on \cite[Propositions 3.1 and 5.1]{bc} which are only stated for Lie algebras over fields of characteristic zero. However, it is easy to see that this assunption is not used in their proofs, and that the results are, in fact, valid over an arbitrary field.

\begin{theor}\label{t:n-2} Let $L$ be a solvable Lie algebra of dimension $n$ with $\alpha(L) = n-2$, and let $A$ be an abelian subalgebra of dimension $n-2$. Then one of the following occurs:
\begin{itemize}
\item[(i)] $\beta(L) = n-2$;
\item[(ii)] $L = L^2 \dot{+} B$, where $B$ is an abelian subalgebra of $L$, $L^2$ is the three-dimensional Heisenberg algebra, $L^{(2)} = \phi(L) = Z(L)$ and $L^2/Z(L)$ is a two-dimensional chief factor of $L$ (in which case $\beta(L) \leq n-3$);
\item[(iii)] $A$ has codimension one in the nilradical, $N$, of $L$, which itself has codimension one in $L$. Moreover, $N^2$ is one dimensional, $Z(N)$ is an abelian ideal of maximal dimension and $\beta(L) = n-3$.
\end{itemize}
\end{theor}
\begin{proof} Let $A$ be a maximal abelian subalgebra of $L$ of dimension $n-2$ and suppose that (i) doesn't hold. 
\begin{description}
\item[(a)] Suppose first that $A$ is a maximal subalgebra of $L$. Then $L$ is as in Proposition \ref{p:compsolv}(ii) and $A$ is a Cartan subalgebra of $L$. Let $L = A \dot{+} L_1$ be the Fitting decomposition of $L$ relative to $A$. Then $L_1 \subseteq L^2$ and $\dim L_1 = 2$. Let $L_1 = Fx + Fy$. 
\par
If $[x,y] = 0$ then $L_1$ is an ideal of $L$ and $L/L_1$ is abelian, so $L^2 \subseteq L_1 \subseteq L^2$. This yields that $L$ is metabelian and $L^2$ is a two dimensional minimal ideal over which $L$ splits. It follows from Proposition \ref{p:metab} that $\beta(L) = n-2$, a contradiction.
\par
If $[x,y] \neq 0$, then $L^2 = F[x,y] + L_1$ and $F[x,y] \subseteq L^{(2)} = Z(L)$, so $F[x,y] = Z(L)$ and we have case (ii). Moreover, if $C$ is a maximal abelian ideal of $L$, then $Z(L) \subseteq C$ and $L^2 \not \subseteq C$. It follows that $C \cap L^2 = Z(L)$. If $\dim C = n-2$, then $\dim (L^2 + C) = \dim L^2 + \dim C - \dim C \cap L^2 = 3 + n-2 - 1 = n$, so $L = L^2 + C$. But then $L^2 = Z(L)$, a contradiction. Hence, $\beta(L) \leq n-3$.
\item[(b)] So suppose that $A$ is not a maximal subalgebra of $L$. Then $A \subset M \subset L$, where $\dim M = n-1$. Moreover, there is such a subalgebra $A$ of $M$ which is an ideal of $M$, by \cite[Proposition 3.1]{bc}. Suppose first that $A$ does not act nilpotently on $L$. Then the Fitting decomposition of $L$ relative to $A$ is $L = M \dot{+} L_1$, and $L_1$ is a one-dimensional ideal of $L$. Put $B = A \dot{+} L_1$, which is an ideal of $L$. Then $C_B(L_1)$ has codimension one in $B$ and so is an abelian ideal of codimension two in $L$. It follows that $\beta(L) = n-2$, a contradiction.
\par
Finally, suppose that $A$ is an ideal of $M$ and that $A$ acts nilpotently on $L$. Then there is a $k \geq 0$ such that $L($ad\,$A)^k \not \subseteq M$ but $L($ad\,$A)^{k+1} \subseteq M$. Let $x \in L($ad\,$A)^k \setminus M$, so $L = M \dot{+} Fx$. Suppose first that $M$ is not an ideal of $L$. Then the core of $M$, $M_L$ has codimension one in $M$, by \cite[Theorem 3.1 and 3.2]{amayo}. If $A = M_L$ then we have case (i), so suppose that $A \neq M_L$ and $M = A + M_L$. Then $[A,x] \subseteq M$ which implies that $[L,A] \subseteq M$ and $[L,M] = [L,M_L] + [L,A] \subseteq M$; that is, $M$ is an ideal of $L$.
\par
Let $N$ be the nilradical of $L$. If $N \subseteq A$ then $A \subseteq C_L(N) \subseteq N$, so $N = A$ and we have case (i) again. If $A \subset N$ then $N = L$ or we can assume that  $N = M$. If $A \not \subseteq N$ and $N \not \subseteq A$ then either $A + N = L$, in which case $L$ is nilpotent, or we can assume that $A + N = M$, in which case $M$ is a nilpotent ideal of $L$ and so $M = N$.
\par
If $L$ is nilpotent, then we have case (i), by \cite[Proposition 5.1]{bc}.If not, then we have case (iii) by Proposition \ref{p:codimone}.
\end{description}
\end{proof}

\begin{coro}\label{c:n-2supsolv} Let $L$ be a supersolvable Lie algebra of dimension $n$ with $\alpha(L) = n-2$. Then $\beta(L) = n-2$.
\end{coro}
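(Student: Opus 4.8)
The plan is to derive the result immediately from Theorem~\ref{t:n-2} by showing that, for a supersolvable $L$, neither case (ii) nor case (iii) of that theorem can occur, so that case (i) must hold. Concretely, I would begin by fixing a maximal abelian subalgebra $A$ of $L$; since $\alpha(L)=n-2$ we have $\dim A=n-2$, and as every supersolvable Lie algebra is solvable, Theorem~\ref{t:n-2} applies and places $(L,A)$ in exactly one of the three listed situations.

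To eliminate case (ii), I would use the fact that in a supersolvable Lie algebra every chief factor is one-dimensional. Indeed, if $H/K$ is a chief factor of $L$, then $H/K$ is a minimal ideal of the (again supersolvable) quotient $L/K$, and a minimal ideal $M$ of a supersolvable Lie algebra is one-dimensional: taking a flag $0=L_0\subset L_1\subset\cdots\subset L_n=L$ of ideals with $\dim L_i=i$ and letting $j$ be least with $M\cap L_j\neq 0$, minimality gives $M\subseteq L_j$ and $M\cap L_{j-1}=0$, whence $\dim M\leq \dim L_j-\dim L_{j-1}=1$. But case (ii) asserts that $L^2/Z(L)$ is a \emph{two}-dimensional chief factor of $L$, a contradiction; so case (ii) is impossible.

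To eliminate case (iii), I would observe that it describes exactly the configuration governed by Corollary~\ref{c:codimone}: there $A$ is an abelian subalgebra of maximal dimension in $L$ lying with codimension one inside the nilradical $N$, and $N$ is an ideal of $L$. Corollary~\ref{c:codimone} then gives $\alpha(L)=\beta(L)$, i.e.\ $\beta(L)=n-2$, which directly contradicts the assertion $\beta(L)=n-3$ that accompanies case (iii). Hence case (iii) cannot occur either, so $(L,A)$ falls under case (i), which is precisely the conclusion $\beta(L)=n-2$.

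I do not expect any real obstacle here: the content is entirely in Theorem~\ref{t:n-2} and Corollary~\ref{c:codimone}, and the only point requiring a moment's thought is the recognition that case (iii) is the hypothesis of Corollary~\ref{c:codimone}; the elimination of case (ii) is a routine remark about chief factors of supersolvable Lie algebras.
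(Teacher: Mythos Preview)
Your proposal is correct and follows the paper's strategy exactly: apply Theorem~\ref{t:n-2} and rule out cases (ii) and (iii), with case (ii) excluded because chief factors of a supersolvable algebra are one-dimensional. The only difference is in case (iii): the paper argues directly that, since $\dim Z(N)=n-3$, supersolvability yields an ideal $B$ of $L$ with $Z(N)\subset B\subset N$ and $\dim B=n-2$, and $B$ is visibly abelian (contradicting $\beta(L)=n-3$); your appeal to Corollary~\ref{c:codimone} is an equally valid shortcut to the same contradiction.
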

\begin{proof} We use the same notation as in Theorem \ref{t:n-2} and show that cases (ii) and (iii) cannot occur. Clearly case (ii) cannot occur, since in that case $L^2/Z(L)$ is a two-dimensional minimal ideal of $L/Z(L)$. So suppose that case (iii) occurs.  Then $\dim Z(N) = n-3$. Since $L$ is supersolvable, there is an ideal $B \subset N$ of $L$ with $\dim(B/Z(N)) = 1$. But clearly $B$ is abelian, contradicting the maximality of $Z(N)$.
\end{proof}
\bigskip

Note that algebras of the type described in Theorem \ref{t:n-2} (ii) and (iii) do exist over the real field, as the following examples show.

\begin{ex}\label{e:one} Let $L$ be the four-dimensional Lie algebra over $\R$ with basis $e_1, e_2, e_3, e_4$ and non-zero products
\[
[e_1,e_2] = e_3, \hspace{.5cm} [e_1,e_3] = - e_2, \hspace{.5cm} [e_2,e_3] = e_4.
\]
Then this algebra is as described in Theorem \ref{t:n-2}(ii). For $L^2 = \R e_2 + \R e_3 + \R e_4$ is the three-dimensional Heisenberg algebra, $L = L^2 \oplus \R e_1$, $L^{(2)} = \R e_4 = Z(L) = \phi(L)$, and $L^2/Z(L)$ is a two-dimensional chief factor of $L$. This algebra has $\alpha(L) = 2$ and $\beta(L) = 1$. We could take $A = \R e_1 + \R e_4$, for example, but $\R e_4$ is the unique maximal abelian ideal of $L$.
\end{ex}

\begin{ex}\label{e:two} Let $L$ be the four-dimensional Lie algebra over $\R$ with basis $e_1, e_2, e_3, e_4$ and non-zero products
\[
[e_1,e_2] = e_2 - e_3, \hspace{.5cm} [e_1,e_4] = 2e_4, \hspace{.5cm} [e_1,e_3] = e_2 + e_3, \hspace{.5cm} [e_2,e_3] = e_4.
\]
Then this algebra is as described in Theorem \ref{t:n-2}(iii). For we could take $A = \R e_3 + \R e_4$, $N = \R e_2 + A$, so $N^2 = \R e_4$, $Z(N) = \R e_4$. We have $\alpha(L) = 2$ and $\beta(L) = 1$.
\end{ex}

\section{Nilpotent Lie algebras with $\alpha(L) = n-3$}
When $L$ is nilpotent of dimension $n$ and $\alpha(L) = n-3$ we obtain that $\alpha(L) = \beta(L)$, but only when $F$ has characteristic different from two.

\begin{theor}\label{t:n-3nilp} Let $L$ be a nilpotent Lie algebra of dimension $n$, over a field $F$ of characteristic different from two, with $\alpha(L) = n-3$. Then $\beta(L) = n-3$.
\end{theor}
\begin{proof} Let $A$ be an abelian subalgebra of $L$ with $\dim A = n-3$, let $N$ be a maximal subalgebra containing $A$ and suppose that $A$ is not an ideal of $L$. Then $N$ is an ideal of $L$, and $A$ is a maximal abelian subalgebra of $N$ of codimension $2$ in $N$. By \cite[Proposition 5.1]{bc} we can assume that $A$ is an ideal of $N$. Let $e_4, \ldots, e_n$ be any basis for $A$ and $L = Fe_1 + N$. We may suppose that $e_3 = [e_1,e_4] \notin A$; set $M = Fe_3 + A$. If $[e_1,M] \subseteq M$, then $M$ is an ideal of $L$, and the result follows from Corollary \ref{c:codimone}. Hence there exists $k$ such that $e_2 = [e_1,e_k] \notin M$ where $3 \leq k \leq n$ and $k \neq 4$. Clearly, $N = Fe_2 + M$. Let $[e_1,e_j] = \sum_{i=2}^n \alpha_{ji} e_i$ for $2 \leq j \leq n$. Then,
\begin{align}
[e_3,e_j] = [[e_1,e_4],e_j] & = - [[e_4,e_j],e_1] - [[e_j,e_1],e_4]  \\
   & = \alpha_{j2} [e_2,e_4] + \alpha_{j3} [e_3,e_4]  \hspace{.2cm} \hbox{for } j \geq 4.
\end{align}
Put $u_j = e_j - \alpha_{j3} e_4$  for $j \geq 5$. Then $[e_3,u_j] = \alpha_{j2} [e_2,e_4]$ for $j \geq 5$.
Therefore, we can choose the elements $e_5, \ldots, e_n$ in our basis so that 
\begin{align}
[e_3,e_j] & = \alpha_{j2} [e_2,e_4] \hbox{ and } \alpha_{j3} = 0 \hbox{ for } j \geq 5.
\end{align}
\medskip

{\bf Case 1} Suppose that $\alpha_{j2} = 0$ for all $j \geq 4$. So $[e_3,e_j] = 0$ for $j \geq 5$, $[L,A] \subseteq M$ and we can put $e_2 = [e_1,e_3]$. We have
\begin{align}
[e_2,e_j] = [[e_1,e_3],e_j] & = - [[e_3,e_j],e_1] - [[e_j,e_1],e_3]  \\
   & = \alpha_{j4} [e_4,e_3] \hspace{.2cm} \hbox{for } j \geq 5.
\end{align}
If $\alpha_{j4} = 0$ for all $j \geq 5$, then $\dim Z(N) \geq n-4$, and if we choose $B/Z(N)$ to be a chief factor of $L$ with $B \subset N$, $B$ is an abelian ideal of $L$ with $\dim B \geq n-3$.
\par
So suppose that $\alpha_{54} \neq 0$, say. Put $v_j = \alpha_{54}e_j - \alpha_{j4}e_5$ for $j \geq 6$.
 Then $[e_2,v_j] = 0$ for $j \geq 6$ and $\dim Z(N) \geq n-5$. So, we  can choose the terms $e_6, \ldots, e_n$ in the initial basis such that they belong to $Z(N)$. Let us note that $N^2$ is spanned by $[e_2,e_3]$, $[e_2,e_4]$ and $[e_3,e_4]$. Now
\begin{align}
[e_1,[e_2,e_5]] & = - [e_2,[e_5,e_1]] - [e_5,[e_1,e_2]] \nonumber \\
   & = \alpha_{54} [e_2,e_4] + \alpha_{55} [e_2,e_5] + \alpha_{22} [e_2,e_5], \hbox{ and } \nonumber \\
[e_1,[e_4,e_3]] & = - [e_4,[e_3,e_1]] - [e_3,[e_1,e_4]] \nonumber \\
    & = [e_4,e_2]. \nonumber
    \end{align}
It follows from $(5)$ that $$2 \alpha_{54} [e_2,e_4] = (\alpha_{55} + \alpha_{22}) \alpha_{54} [e_3,e_4],$$
so $\dim N^2 \leq 2$. Let $B/Z(N)$ be a chief factor of $L$ with $B \subset N$. Then $B$ is an abelian ideal of $L$ of dimension at least $n-4$. Suppose that $\dim B = n-4$ and that this is a maximal abelian ideal of $L$. Then $C_L(B) = B$. Put $B = Z(N) + Fb$. Now $[e_3,b]$, $[e_4,b]$, $[e_5,b] \in N^2$.
 Since $\dim N^2 \leq 2$ these elements are linearly dependent. Hence we have $\beta_3 [e_3,b] + \beta_4 [e_4,b] + \beta_5 [e_5,b] = 0$ for some $\beta_3, \beta_4, \beta_5 \in F$, not all zero. It follows that $\beta_3 e_3 + \beta_4 e_4 + \beta_5 e_5 \in C_L(B) = B$, whence $B = F(\beta_3 e_3 + \beta_4 e_4 + \beta_5 e_5) + Z(N)$. Now $e_5 \in C_L(B) = B$, so $B = Z(N) + Fe_5$. But then $e_4 \in C_L(B) \setminus B$, a contradiction. It follows that there is an abelian ideal of dimension $n-3$.
\medskip

{\bf Case 2} Suppose that $\alpha_{52} \neq 0$, say, so $[e_1,e_5] \notin M$. Put $e_2 = [e_1,e_5]$, so that $\alpha_{52} = 1$, $\alpha_{5j} = 0$ for $j \neq 2$; put also $v_j = e_j - \alpha_{j2}e_5$ for $j \geq 6$. Then $[e_3,v_j] = 0$ for $j \geq 6$. We also have
\begin{align}
[e_1,v_j] & =  [e_1,e_j] - \alpha_{j2}[e_1,e_5] \nonumber \\
   & \in A  \hbox{ for } j \geq 6, \hspace{2cm} \hbox{using } (3)  \nonumber
\end{align}
so $[e_2,v_j] = [[e_1,e_5],v_j] = - [[e_5,v_j],e_1] - [[v_j,e_1],e_5] = 0$ for $j \geq 6$. So again $\dim Z(N) \geq n-5$, we can choose $e_6, \dots, e_n$ in our original basis to belong to $Z(N)$, and $N^2$ is spanned by $[e_2,e_3]$, $[e_2,e_4]$, $[e_2,e_5]$ and $[e_3,e_4]$. Now
\begin{align}
[e_1,[e_2,e_4]] & = -[e_2,[e_4,e_1]] - [e_4,[e_1,e_2]] \nonumber \\
   & = [e_2,e_3] + \alpha_{22} [e_2,e_4] + \alpha_{23} [e_3,e_4],
\end{align}
and
\begin{align}
[e_1,[e_3,e_5]] & = - [e_3,[e_5,e_1]] - [e_5,[e_1,e_3]] \nonumber \\
     & = [e_3,e_2] + \alpha_{32}[e_2,e_5] + \alpha_{33} [e_3,e_5].
\end{align}
Since $[e_3,e_5] = [e_2,e_4]$ this yields
\begin{align}
2[e_2,e_3] = (\alpha_{33} - \alpha_{22}) [e_2,e_4] - \alpha_{23} [e_3,e_4] + \alpha_{32}[e_2,e_5],
\end{align}
so $N^2$ is spanned by $[e_2,e_4]$, $[e_2,e_5]$ and $[e_3,e_4]$.
\par

We have $N^2 \subseteq A$, since $\dim N/A = 2$. Suppose first that $N^2 \not \subseteq Z(N)$. Then choose $B \subseteq N^2 + Z(N)$ such that $B/Z(N)$ is a chief factor of $L$. Then $B \subset A$ and $A \subseteq C_L(B) \setminus B$. It follows that $L$ has an abelian ideal of dimension $n-3$.
\par

So consider now the case where $N^3 = 0$. We have $\dim L/(L^2 + Z(N)) \leq 3$ since $e_2, e_3 \in L^2$. Suppose first that $\dim L/(L^2 + Z(N)) = 3$, so that $L^2 + Z(N) = Fe_2 + Fe_3 + Z(N) = D$, say. This is an ideal of $L$, so $\alpha_{24} = \alpha_{25} = \alpha_{34} = \alpha_{35} = 0$. Now
\begin{align}
[e_1,[e_2,e_3]] & = - [e_2,[e_3,e_1]] - [e_3,[e_1,e_2]] \nonumber \\
   & = \alpha_{33} [e_2,e_3] + \alpha_{22} [e_2,e_3] \nonumber \\
   & = (\alpha_{33} + \alpha_{22}) [e_2,e_3].
\end{align}
Since $L$ is nilpotent we must have $\alpha_{33} = - \alpha_{22}$. Now $$\dim ([L,D]+Z(N))/Z(N) \leq 1,$$ so $[e_1,e_3] + Z(N) = \lambda [e_1,e_2] + Z(N)$ for some $\lambda \in F$. It follows that $\alpha_{32} = \lambda \alpha_{22}$, $- \alpha_{22} = \alpha_{33} = \lambda \alpha_{23}$ and $B = F(e_3 - \lambda e_2) + Z(N)$ is an abelian ideal of $L$. Suppose first that $\lambda \neq 0$. Then
$$\begin{array}{l}
[e_3 - \lambda e_2, \lambda e_2 + e_3 - \lambda \alpha_{23} e_4 - \alpha_{32} e_5]   \\
=  - 2 \lambda[e_2,e_3] + (\lambda^2 \alpha_{23}-\alpha_{32})[e_2,e_4]-\lambda \alpha_{23} [e_3,e_4] + \lambda \alpha_{32} [e_2,e_5]   \\
=  \lambda(-2[e_2,e_3] + (\alpha_{33} - \alpha_{22}) [e_2,e_4] - \alpha_{23} [e_3,e_4] + \alpha_{32}[e_2,e_5])  \\
=  0,  
\end{array}$$
using $(8)$. It follows that $C_L(B) \neq B$, so $B$ is not a maximal abelian ideal of $L$ and the result holds.
\par
If $\lambda = 0$ then $\alpha_{32} = \alpha_{33} = \alpha_{22} = 0$ and $(8)$ becomes $2[e_2,e_3] = - \alpha_{23} [e_3,e_4]$. But this implies that $[2e_2 - \alpha_{23}e_4,e_3] = 0$, and $C_L(B) \neq B$ again.
\par

So suppose now that $\dim L/(L^2 + Z(N)) = 2$. Then there is an $n_1 \in N$ such that $L = Fe_1 + Fn_1 + Fn_2 + Fn_3 + Fn_4 + Z(N)$ where $n_2 = [e_1,n_1]$, $n_3 = [e_1,n_2]$, $n_4 = [e_1,n_3]$, $[e_1,n_4] \in Z(N)$. Now
\begin{align}
[e_1,[n_1,n_2]] & = - [n_1,[n_2,e_1]] - [n_2,[e_1,n_1]] = [n_1,n_3] \nonumber \\
[e_1,[n_1,n_3]] & = - [n_1,[n_3,e_1]] - [n_3,[e_1,n_1]] = [n_1,n_4] + [n_2,n_3] \nonumber \\
[e_1,[n_1,n_4]] & = - [n_1,[n_4,e_1]] - [n_4,[e_1,n_1]] = [n_2,n_4] \nonumber \\
[e_1,[n_2,n_4]] & = - [n_2,[n_4,e_1]] - [n_4,[e_1,n_2]] = [n_3,n_4] \nonumber \\
[e_1,[n_2,n_3]] & = - [n_2,[n_3,e_1]] - [n_3,[e_1,n_2]] = [n_2,n_4]. \nonumber
\end{align}
Since $\dim N^2 \leq 3$ we have
$$ 0 = [e_1,[e_1,[e_1,[n_1,n_2]]]] = 2[n_2,n_4].$$ Clearly $B = Fn_4 + Z(N)$ is an abelian ideal, and $n_2 \in C_L(B) \setminus B$, which completes the proof.
\end{proof}

\bigskip

The restriction on the characteristic in the above result is necessary, as the following example shows.

\begin{ex}\label{e:three} Let $L$ be the nine-dimensional Lie algebra, over any field $F$ of characteristic two, with basis $e_1, e_2, e_3, e_4, e_5, e_6, e_7, e_8, e_9$ and non-zero products
\[
[e_1,e_2] = e_6, \hspace{.5cm} [e_1,e_3] = e_2, \hspace{.5cm} [e_1,e_4] = e_3, \hspace{.5cm} [e_1,e_5] = e_4, \hspace{.5cm} [e_1,e_8] = e_7,
\]
\[
[e_1,e_9] = e_8, \hspace{.5cm} [e_2,e_3] = e_7, \hspace{.5cm} [e_2,e_4] = e_8, \hspace{.5cm} [e_2,e_5] = e_9, \hspace{.5cm} [e_3,e_4] = e_9.
\]
This is a nilpotent Lie algebra whose abelian subalgebras of maximal dimension are
$$F(e_3 + \lambda e_4) + Fe_5 + Fe_6 + Fe_7 + Fe_8 + Fe_9 \hspace{.5cm} \hbox{and}$$
$$F(\lambda e_3 + e_4) + Fe_5 + Fe_6 + Fe_7 + Fe_8 + Fe_9 \hspace{.5cm} (\lambda \in F),$$
so $\alpha(L) = 6$. However, none of these are ideals of $L$; in fact the abelian ideal of maximal dimension is
$$Fe_2 + Fe_6 + Fe_7 + Fe_8 + Fe_9,$$
so $\beta(L) = 5$.
\end{ex}

Note that the above example is valid over an algebraically closed field of characteristic two, and so shows that \cite[Proposition 2.6]{bc} does not hold over such fields, even when $L$ is nilpotent.
\par

The results above prompt the following questions. 
\begin{itemize}
\item[1.] Does Theorem \ref{t:n-3nilp} hold for supersolvable Lie algebras $L$?
\item[2.] Let $L$ be a supersolvable/nilpotent Lie algebra with $\alpha(L) = n-k$ containing an abelian subalgebra $A$ of maximal dimension, and let $N$ be a maximal subalgebra containing $A$ that is an ideal of $L$.
\begin{itemize} 
\item[(i)] Is it true that $\dim Z(N) \geq n - 2k + 1$?
\item[(ii)] Is it true that $\dim N^2 \leq k - 1$?
\item[(iii)] If they are true, do 1 and 2 imply that $\beta(L) = n - k$?
\end{itemize}
\end{itemize}
However, we have seen that restrictions on the underlying field are necessary for any of these to be true.
\bigskip


\begin{thebibliography}{1}
\bibitem{amayo} {\sc R. K. Amayo}, `Quasi-ideals of Lie algebras II', Proc. London Math. Soc. (3) {\bf 33} (1979), 37--64.
\bibitem{ab} {\sc L. Auslander and J. Brezin}, `Almost Algebraic Lie Algebras', J. Algebra  {\bf 8} (1968), 295--313.
\bibitem{BAR} {\sc D.W. Barnes and M.L. Newell}, `Some theorems on saturated homomorphs of soluble Lie algebras',
Math. Zeit. {\bf 115} (1970), 179--187.
\bibitem{baki} {\sc N. Bourbaki}, `Elements of Mathematics, Lie Groups and Lie Algebras', Chapters 1-3, Springer (1989).
\bibitem{bigen} {\sc K. Bowman, D. A. Towers and V. R. Varea}, `Two Generator Subalgebras Of Lie Algebras' Linear Multilinear Algebra {\bf55}(5) (2007), 429--438.
\bibitem{BU10} {\sc D. Burde, C. Steinhoff}, `Classification of orbit closures
of $4$--dimensional complex Lie algebras', J.\ Algebra
{\bf 214}(2) (1999), 729--739.
\bibitem{bc} {\sc D. Burde and M. Ceballos}, `Abelian ideals of maximal dimension for solvable Lie algebras', J. Lie Theory {\bf 22}  (2012), 741--756.
\bibitem{GOR} {\sc V. V. Gorbatsevich}, `On the level of some solvable Lie algebras',
Siberian Math.\ J.\ {\bf 39} (5) (1998), 872--883.
\bibitem{GRH} {\sc F. Grunewald, J. O'Halloran}, `Varieties of nilpotent Lie algebras
of dimension less than six', J.\ Algebra {\bf 112} (2) (1988), 315--325.
\bibitem{jac} {\sc N. Jacobson}, `Schur's Theorems on Commutative Matrices', Bull. Amer. Math. Soc. {\bf 50} (1944), 431--436.
\bibitem{jacob} {\sc N. Jacobson}, `Lie algebras', New York: Dover Publ. (1979).
\bibitem{MAL} {\sc A. Malcev}, `Commutative subalgebras of semi-simple Lie algebras',
Amer.\ Math.\ Soc.\ Transl.\ {\bf 1951}(40) (1951), 15 pp.
\bibitem{NPO} {\sc M. Nesterenko, R. Popovych}, `Contractions of low-dimensional Lie
algebras',  J.\ Math.\ Phys. {\bf 47} (12) (2006),  123515,
45 pp.
\bibitem{Sch} {\sc I. Schur}, `Zur Theorie vertauschbarer Matrizen',
 J. Reine. Angew. Math, {\bf 130} (1905), 66--76.
\bibitem{SEE} {\sc C. Seeley}, `Degenerations of 6--dimensional nilpotent Lie algebras
over $\C$', Comm. in Algebra {\bf 18} (1990), 3493--3505.
\bibitem{stew} {\sc I. Stewart}, `Bounds for the dimensions of certain Lie algebras', J. London Math. Soc. (2) {\bf 3} (1971), 731--732.
\bibitem{frat} {\sc D. A. Towers}, `A Frattini Theory for Algebras', Proc. London Math. Soc. (3) {\bf 27} (1973), 440--462.
\bibitem{tow-var}{\sc  D. A. Towers and V. R. Varea}, `Elementary Lie Algebras and Lie A-algebras', J. Algebra
{\bf 312}  (2007), 891--901.
\bibitem{ind} {\sc D. A. Towers}, `The index complex of a maximal subalgebra of a Lie algebra', Proc. Edin. Math. Soc. {\bf 54} (2011), 531--542.
\end{thebibliography}
\end{document}